\documentclass{article}
\usepackage{graphicx,amsmath,amssymb,amsthm} 
\usepackage[margin=1in]{geometry}
\usepackage{subcaption}
\usepackage{comment}

\usepackage{algorithm}
\usepackage{algpseudocode}
\usepackage{booktabs}
\usepackage[T1]{fontenc}
\usepackage{hyperref}

\newtheorem{theorem}{Theorem}
\newtheorem{lemma}{Lemma}

\newtheorem{proposition}{Proposition}

\newcommand{\ve}{\mathcal{V}}
\title{LP-Based Algorithms for Scheduling in a Quantum Switch}
\author{R. Srikant\\ ECE, CSL, NCSA\\rsrikant@illinois.edu}

\begin{document}

\maketitle
\begin{abstract}
    We consider scheduling in a quantum switch with stochastic entanglement generation, finite quantum memories, and decoherence. The objective is to design a scheduling algorithm with polynomial-time computational complexity that stabilizes a nontrivial fraction of the capacity region. Scheduling in such a switch corresponds to finding a matching in a graph subject to additional constraints. We propose an LP-based policy, which finds a point in the matching polytope, which is further implemented using a randomized decomposition into matchings. The main challenge is that service over an edge is feasible only when entanglement is simultaneously available at both endpoint memories, so the effective service rates depend on the steady-state availability induced by the scheduling rule. To address this, we introduce a single-node reference Markov chain and derive lower bounds on achievable service rates in terms of the steady-state nonemptiness probabilities. We then use a Lyapunov drift argument to show that, whenever the request arrival rates lie within the resulting throughput region, the proposed algorithm stabilizes the request queues. We further analyze how the achievable throughput depends on entanglement generation rates, decoherence probabilities, and buffer sizes, and show that the throughput lower bound converges exponentially fast to its infinite-buffer limit as the memory size increases. Numerical results illustrate that the guaranteed throughput fraction is substantial for parameter regimes relevant to near-term quantum networking systems.
\end{abstract}

\section{Introduction}

The development of a large-scale quantum internet is widely viewed as a
key enabling technology for both secure communication and distributed
quantum computation.
Entanglement-based protocols allow distant users to establish
information-theoretically secure keys,
while also enabling remote quantum operations,
teleportation, and the interconnection of geographically distributed
quantum processors
\cite{wehner2018quantum,Muralidharan2016}.
A central architectural component in such networks is the
\emph{quantum switch},
a device that generates and stores entangled qubits and performs
entanglement swapping operations to establish end-to-end entanglement
between user pairs.
In contrast to classical packet switches,
entanglement is a fragile resource that can be stored only for limited
time due to decoherence.
As a result, scheduling decisions in a quantum switch must jointly
account for stochastic entanglement generation, finite quantum-memory
buffers, and random loss due to decoherence.

Motivated by these challenges,
recent work has begun to study scheduling policies for quantum switches. Special cases of general quantum switches were first studied in \cite{vasantam2022throughput,zubeldia2026matching,nain2022analysis,promponas2024maximizing}.
The general version of the quantum switch was studied in
\cite{Bhambay2025} where the capacity region of the
system was characterized and the throughput-optimal scheduling rules were established.
These results provide an important theoretical foundation for the
operation of quantum switches. However, 
the resulting optimal policies can be computationally expensive.
In particular, the throughput-optimal policy requires the solution of a Markov decision process, whose size grows exponentially with the number of entanglement-buffers, rendering exact dynamic programming methods impractical except for very small switches. A step towards resolving this issue has been taken in \cite{Bhambay2026}, where the authors show that performing one value iteration step at each time instant is sufficient to achieve throughput optimality. But even this could be computationally expensive if the number and size of the entanglement buffers is large. 

The quantum switch problem has some similarities with scheduling problems in traditional communication networks initiated in
\cite{TassiulasEphremides1992}, where it was shown
that MaxWeight scheduling stabilizes any arrival rate vector
in the interior of the capacity region for a broad class of constrained
queueing systems.
In the context of input-queued packet switches,
\cite{McKeown1999} demonstrated that MaxWeight-based
algorithms achieve full throughput for crossbar switches.
These results laid the foundations for a large body of work on
throughput-optimal scheduling in wireless networks and packet switches
(see, e.g., \cite{srikant2014communication}).
At the same time, a complementary line of work has investigated
\emph{constant-factor approximations} to throughput-optimal scheduling.
In many systems, implementing the exact MaxWeight rule requires solving
computationally difficult combinatorial optimization problems in every
time slot.
This observation has motivated the study of simplified scheduling rules
that achieve a guaranteed fraction of the capacity region.
For example, two well-known papers have established
half-capacity or constant-factor guarantees for switch scheduling
algorithms
\cite{weller1997scheduling,DaiPrabhakar2000}.
These results demonstrate that significant computational savings can be
achieved while sacrificing only a bounded fraction of the maximum
achievable throughput.

In this paper, we investigate similar approximation guarantees for the quantum switch.
We restrict our attention to the case where only bipartite entanglements are requested.
Our goal is to design a scheduling algorithm that
(i) has polynomial computational complexity and
(ii) provides guarantees on the fraction of the capacity region
that can be stabilized.

To this end, our contributions are as follows:
\begin{itemize}
    \item We propose an LP-based scheduling framework for the quantum
switch.
The key idea is to replace the intractable MDP with a linear programming
relaxation that determines fractional edge activation rates.
These fractional rates are then implemented through a randomized
decomposition into matchings.
A central technical challenge arises from the stochastic availability of
entanglement resources.
To address this issue, we introduce a reference entanglement-buffer
Markov chain whose steady-state availability captures the probability
that entanglement is present at each node.
Using a coupling argument, we show that the achievable service rates
under our policy are lower bounded by the product of these node
availabilities.
This leads to a \emph{coherence factor} that quantifies the
fraction of the classical matching capacity region achievable by the
quantum switch. We show that the algorithm stabilizes the queues if the arrival rates are within this fraction of the capacity region using a Lyapunov drift argument.
\item We further analyze how this coherence factor depends on physical system
parameters, including entanglement arrival rates, decoherence
probabilities, and buffer sizes.
Our analysis establishes exponential convergence of the achievable
throughput to its infinite-buffer limit and shows that the achievable
fraction of the capacity region approaches one as decoherence decreases
or entanglement generation rates increase.
\item Finally, we present a lower-complexity algorithm that omits blossom
constraints from the LP formulation.
Using classical polyhedral properties of fractional matchings,
we show that a simple scaling yields a feasible point in the matching
polytope, resulting in a scheduling policy that stabilizes a constant
fraction of the capacity region while further reducing computational
complexity.
\end{itemize}

Together, these results provide polynomial-time scheduling
algorithms with provable performance guarantees for quantum switches,
bridging ideas from classical switch scheduling,
polyhedral combinatorics, and emerging quantum networking systems.

\section{System Model and Problem Formulation}

Consider a quantum switch represented by a graph $G = (V, E)$, where $V$ is the set of vertices (quantum memories) and $E$ represents edges (user pairs). The system operates in discrete time slots $t = 0, 1, \dots$. Entangled qubits (which we will call entanglements) arrive and are queued at the vertices and service requests arrive and are queued at the edges of the graph. In each time slot, a request at an edge can be served only if an entanglement is available at each of the two vertices of the edge. 

Each node $v \in V$ maintains a local buffer of size $B_v$ to store entanglements. We denote the number of entanglements stored at vertex $v$ at time $t$ by $L_v(t) \in \{0, \dots, B_v\}$. In each time slot, new entanglements arrive at node $v$ according to a Bernoulli process with parameter $\lambda_v$.
Stored entanglements can be lost due to decoherence. We model decoherence loss as follows: each stored entanglement at node $v$ decays independently with probability $\mu_v$ per time slot.

For each edge $e \in E$, user requests arrive according to an i.i.d. stochastic process $A_e(t)$ with mean rate $\mathbb{E}[A_e(t)] = \nu_e$ and variance $\text{Var}(A_e(t)) = \sigma_e^2$. These requests are stored in an infinite-buffer queue denoted by $R_e(t)$. 

A scheduling algorithm determines which requests to serve in each time slot. A schedule is represented by a matching $M(t) \subset E$, i.e., a collection of edges such that no two edges in $M(t)$ share a common node, respecting the physical constraint that a node can serve at most one request per slot.

We assume the following order of events within a time slot:
\begin{enumerate}
    \item \textit{Schedule:} A scheduler chooses a schedule for the time slot.
    \item \textit{Service:} If an edge is included in the schedule, then if a request is waiting in the request queue at the edge, it is served if the entanglement queues at its vertices are non-empty. If a service occurs at an edge, then a request is removed from its request queue and an entanglement is removed from each of the corresponding vertex entanglement queues.
    \item \textit{Decoherence:} Next, the remaining entanglements at each vertex $u$ independently decohere with probability $\mu_u$.
    \item \textit{Arrivals:} At each vertex $u,$ a new entanglement arrives with probability $\lambda_u$. And arrivals occur at each of the edges according to their respective arrival processes.
\end{enumerate}

The queue evolution equations are given by:
\begin{align}
    R_e(t+1) &=  R_e(t) - S_e(t) + A_e(t), \\
    L_v(t+1) &= \min\left(B_v,  L_v(t) - \sum_{e \in \delta(v)} S_e(t) - D_v(t) + Y_v(t)\right),
\end{align}
where:
\begin{itemize}
    \item $S_e(t) \in \{0, 1\}$ indicates if edge $e$ was served. Service occurs only if $e\in M(t),$ $R_e(t) > 0$ and the required entanglements are available at the beginning of the slot ($L_u(t) > 0, L_v(t) > 0$). Due to the fact that each schedule is a matching in the graph and $S_e(t)$ is the actual service, $\sum_{e \in \delta(v)} S_e(t)\leq L_v(t)$ $\forall v.$ 
    \item $D_v(t)$ represents the number of entanglements lost to decoherence from $L_v(t)- \sum_{e \in \delta(v)} S_e(t)$, i.e., $D_v(t) \sim \text{Binom}(L_v(t) - \sum_{e\in\delta(v)} S_e(t), \mu_v)$
    \item $Y_v(t)$  represent the arrivals of entanglements, which occur after service and decoherence. Newly arriving entanglements are subject to decoherence only in subsequent slots.
    \item $A_e(t)$ represent the arrivals of requests, which occur after service.
\end{itemize}

\subsection{Markovian Scheduling and Throughput Optimality}

A \textit{Markovian schedule} is a randomized algorithm that selects a matching $M(t)$ based on the current state of the system, specifically the vector of queue lengths $\mathbf{Q}(t) = (\mathbf{L}(t), \mathbf{R}(t))$. As shown in \cite{Bhambay2025}, a throughput-optimal scheduling policy for such a switch can be obtained by solving a Markov Decision Process (MDP). Roughly speaking, the MDP is obtained by assuming a time-scale decomposition of the system, where the request queue states are fixed and one controls the entanglement queues by choosing schedules to maximize a long-term expected reward. Formally, this corresponds to the fluid-limit used to characterize throughput-optimal policies, where request queue lengths enter the MDP only as fixed weights \cite{Bhambay2025}. The instantaneous reward is the sum of the request queue lengths of all the edges included in the schedule that are successfully served. More precisely, the MDP is defined by the tuple $(\mathcal{S}, \mathcal{A}, \mathcal{P}, \mathcal{R})$:

\begin{itemize}
    \item \textit{State Space ($\mathcal{S}$):} The set of all possible entanglement queue configurations: $\mathcal{S} = \prod_{v \in V} \{0, \dots, B_v\}$. We will denote a specific state in the MDP by $\mathbf{s}.$
    
    \item \textit{Action Space ($\mathcal{A}$):} In each slot, the controller selects a matching $M \in \mathcal{M}$, where $\mathcal{M}$ is the set of all valid matchings in the graph.
    
    \item \textit{Reward Function ($\mathcal{R}$):} The immediate reward for taking action $M$ in state $\mathbf{s}$ is defined as the sum of the request queue lengths of the edges included in the matching:
    \begin{equation}
        r(\mathbf{s}, M) = \sum_{e \in M} w_e \cdot \mathbb{I}(\text{Resources available for } e),
    \end{equation}
    where $w_e=R_e(t)$ is fixed for the purposes of defining the MDP under a time-scale separation as in \cite{Bhambay2025}. Note that resources available for $e$ means  $(L_u(t)>0,L_v(t)>0,R_e(t)>0).$
    \item \textit{Transition Dynamics ($\mathcal{P}$):} The system evolves according to the entanglement queue evolution equations defined previously, governed by stochastic arrivals and decoherence updates.
\end{itemize}

The goal of the MDP is to find a policy $\pi$ that maximizes the long-term expected average reward. The optimal value function $W^*$ satisfies the Bellman Optimality Equation:
\begin{equation}
    W^*(\mathbf{s}) +r^*_{MDP}= \max_{M \in \mathcal{M}} \left\{ r(\mathbf{s}, M) + \sum_{\mathbf{s}' \in \mathcal{S}} P(\mathbf{s}' | \mathbf{s}, M) W^*(\mathbf{s}') \right\}
\end{equation}
where $r^*_{MDP}$ is the optimal average reward.

Solving the MDP is computationally intractable because state space size is exponential in the number of entanglement queues. For a network with $|V|$ nodes, $|\mathcal{S}|$ is $O(B^{|V|})$.  Our objective is to design a low-complexity scheduling algorithm with a provable performance guarantee. Specifically, we seek an algorithm that:
\begin{enumerate}
    \item \textit{Polynomial Complexity:} The computation time per slot should be polynomial in $|V|$ and $|E|$.
    \item \textit{Approximation Guarantee:} We aim to prove that our algorithm is $\alpha$-throughput optimal.
\end{enumerate}

\textbf{Definition ($\alpha$-Throughput Optimality):} An algorithm is $\alpha$-throughput optimal (for $0 < \alpha \le 1$) if it stabilizes the system for any arrival rate vector $\boldsymbol{\nu}$ such that $\boldsymbol{\nu} \in \alpha \Lambda$, where $\Lambda$ is the capacity region achievable by the MDP solution. In other words, the algorithm guarantees stable operation for at least an $\alpha$-fraction of the theoretical maximum throughput.

\section{Proposed Algorithm and Its Achievable Throughput}\label{sec: proposed}

As discussed in the previous section, a throughput–optimal scheduling rule can in principle be obtained by solving the Markov decision process (MDP) defined by the entanglement-buffer state $s\in\mathcal S$ and the set of feasible matchings $\mathcal M$. One way to compute the optimal average reward of a finite-state MDP is through a linear programming (LP) formulation in terms of stationary state–action frequencies.

Let $x(s,M)$ denote the steady-state fraction of time that the system is in state $s\in\mathcal S$ and action $M\in\mathcal M$ is chosen. The average-reward optimal control problem can then be written as the following LP:
\begin{align}
\text{maximize}\quad & \sum_{s\in\mathcal S}\sum_{M\in\mathcal M} r(s,M)x(s,M) \\
\text{subject to}\quad 
& \sum_{M\in\mathcal M} x(s,M)
=
\sum_{s'\in\mathcal S}\sum_{M\in\mathcal M} P(s\,|\,s',M)x(s',M), 
\quad \forall s\in\mathcal S \\
& \sum_{s\in\mathcal S}\sum_{M\in\mathcal M} x(s,M)=1 \\
& x(s,M)\ge 0, \quad \forall s,M .
\end{align}

The variables $x(s,M)$ represent the stationary occupancy measure of the MDP. The first set of constraints enforces flow conservation of the stationary distribution, while the normalization constraint ensures that the total probability mass is one. The optimal value of this LP is equal to the optimal average reward $r^*_{\text{MDP}}$.

Although this formulation is conceptually useful, it is computationally intractable for quantum switches of practical size. The state space of the MDP is 
\[
|\mathcal S|=\prod_{v\in V}(B_v+1),
\]
which grows exponentially with the number of entanglement buffers. Consequently, the above LP contains exponentially many variables and constraints.

To obtain a tractable algorithm, we therefore introduce a relaxation of this LP. Instead of tracking the full state of the entanglement queues, we consider the long-term average rate at which each edge is activated. Let
$
x_e
$
denote the long-run service attempt rate assigned to edge $e\in E$. These rates must satisfy two fundamental constraints.

First, because a node can participate in at most one entanglement swap in a given slot, the total service rate of edges incident on node $v$ cannot exceed the long-term rate at which entanglements are generated at that node. Since entanglements arrive at node $v$ with probability $\lambda_v$ per slot, this implies the node-capacity constraint
\begin{equation}
\sum_{e\in\delta(v)} x_e \le \lambda_v, \qquad \forall v\in V.
\end{equation}

Second, since only matchings can be scheduled in any given slot, the vector $x=(x_e)_{e\in E}$ must lie in the convex hull of the set of matchings of the graph. This leads to the following linear programming relaxation of the MDP control problem:
\begin{align}
    \text{maximize} \quad & \sum_{e \in E} w_e(t) x_e \label{eq: LP1}\\
    \text{subject to} \quad & \sum_{e \in \delta(v)} x_e \le \lambda_v, \quad \forall v \in V \label{eq: LP2}\\
    \quad & \sum_{e \in E(S)} x_e \;\le\; \frac{|S|-1}{2},
\quad \forall\, S \subseteq V \text{ with } |S| \text{ odd},\ |S|\ge 3 \label{eq: blossom}\\
    & x_e\geq 0, \quad \forall e \in E, \label{eq: LP3}
\end{align}
where $E(S)$ denotes the set of all edges in $S.$

We note that (\ref{eq: blossom}) are blossom inequalities used to characterize the matching polytope \cite{Edmonds65}. However, due to (\ref{eq: LP2}), our constraint set is not a matching polytope, but a polytope which is a subset of the matching polytope since $\lambda_v\leq 1$ $\forall v$.
The optimal solution $\mathbf{x}^*$ to the LP defined by \eqref{eq: LP1}--\eqref{eq: LP3} represents an upper bound on the maximum value achievable under the assumption that the switch can support fractional edge activations, i.e., the optimal solution $\mathbf{x}^*$ may be such that, for each edge $e,$ $x_e^*$ may not lie in $\{0,1\}$ but will more generally lie in $[0,1]$. 

Since the vector $\mathbf{x}^*$ is guaranteed to lie within the matching polytope of the graph $G$, defined as the convex hull of all valid integral matchings, by Carathéodory's theorem, $\mathbf{x}^*$ admits a decomposition into a convex combination of at most $|E|+1$ integral matchings. This decomposition yields a set of matchings $\mathcal{M} = \{M_1, M_2, \dots, M_k\}$ and associated probabilities $\{p_1, p_2, \dots, p_k\}$ such that:
\begin{equation}\label{eq: decomp}
    \sum_{j=1}^k p_j \mathbf{1}_{M_j} = \mathbf{x}^* \quad \text{and} \quad \sum_{j=1}^k p_j = 1, \qquad p_j\geq 0 \quad\forall j.
\end{equation}

\paragraph{Proposed Scheduling Algorithm} Our scheduling algorithm is as follows: at time slots $t=0, T, 2T,...$ observe the request queue length vector $R(t).$ Obtain probabilities $\{p_M\}$ as above and select matchings according to these probabilities over time slots $t,t+1,t+T-1.$ In other words, a probabilistic schedule is chosen once every $T$ slots using the request queue lengths at the beginning of the $T$ slots. 

We will show that the parameter $T$ can be chosen appropriately depending on how far the arrival rates are from the boundary of the capacity region.

\subsection{Solving the LP and Decomposing the Solution Into Matchings}

The LP in \eqref{eq: LP1}--\eqref{eq: LP3} can be solved in polynomial time using
the ellipsoid method \cite{grotschel1988geometric} due to the fact that there is
a separation oracle for the constraints \cite{padberg1982odd,letchford2004faster}.
Given the solution $\mathbf{x}^*$, a polynomial-time algorithm to compute an
explicit convex decomposition into matchings is provided in~\cite{CarrVempala02}.
However, as in much of LP theory, algorithms that are not theoretically
polynomial-time are often practically more efficient. For our problem, this
means that the LP should be solved using the simplex method and the decomposition
should be performed using column generation \cite{DesrosiersLubbecke2005}, which
we present in Algorithm~\ref{alg:decomp_colgen}.

\begin{algorithm}
\caption{Decomposition Using Column Generation}
\label{alg:decomp_colgen}
\begin{algorithmic}[1]
\Require Vector $\mathbf{x}^* \in Co(\mathcal{M})$
\Ensure Set of pairs $\{(p_k, M_k)\}$ such that
$\sum_k p_k \mathbf{1}_{M_k} = \mathbf{x}^*$,
$\sum_k p_k = 1$, and $p_k \ge 0$

\State \textbf{Initialize:} Choose any nonempty set
\(\mathcal K\subseteq\mathcal M\), e.g.,
a set of matchings obtained heuristically.
\Loop
    \State \textit{// Step 1: Phase-I Restricted Master Problem}
    \State Solve the following LP over the current set $\mathcal{K}$:
    \[
    \begin{aligned}
        \text{minimize} \quad
            & \sum_{e\in E} (a_e^+ + a_e^-) \\
        \text{subject to} \quad
            & \sum_{M\in\mathcal{K}} p_M \mathbf{1}_M
              + \mathbf{a}^+ - \mathbf{a}^- = \mathbf{x}^*, \\
            & \sum_{M\in\mathcal{K}} p_M = 1, \\
            & p_M \ge 0 \quad \forall M\in\mathcal{K}, \\
            & a_e^+ \ge 0,\quad a_e^- \ge 0 \quad \forall e\in E .
    \end{aligned}
    \]
    \State Let $(\mathbf{y},z)$ be the dual variables associated with the
    equality constraints.

    \State \textit{// Step 2: Pricing Oracle}
    \State $M^* \leftarrow \text{MaxWeightMatching}(G,\text{weights}=\mathbf{y})$
    \State $W^* \leftarrow \sum_{e\in M^*} y_e$

    \State \textit{// Step 3: Check Termination Condition}
    \If{$W^* \le -z$}
    \State \textbf{Break}
    \Comment{No improving column exists; since $\mathbf{x}^*\in Co(\mathcal{M})$, the Phase-I objective is zero}
\Else
    \State $\mathcal{K} \leftarrow \mathcal{K}\cup\{M^*\}$
\EndIf
\EndLoop
\State \Return $\{(p_M,M)\mid M\in\mathcal{K},\ p_M>0\}$
\end{algorithmic}
\end{algorithm}

By introducing artificial variables
\(\mathbf a^+\) and \(\mathbf a^-\), the problem of decomposing edge activation probabilities into matchings can be written as
\begin{align*}
    \text{minimize} \quad
        & \sum_{e\in E}(a_e^+ + a_e^-) \\
    \text{subject to} \quad
        & \sum_{M\in\mathcal M}p_M\mathbf 1_M
          +\mathbf a^+-\mathbf a^-=\mathbf x^*, \\
        & \sum_{M\in\mathcal M}p_M=1, \\
        & p_M\ge 0 \quad \forall M\in\mathcal M, \\
        & a_e^+\ge 0,\quad a_e^-\ge 0 \quad \forall e\in E .
\end{align*}
Moreover, since
\(\mathbf x^*\in Co(\mathcal M)\), the above LP, which we will call the full Master LP,  has an optimal value
zero.

The dual of this LP is
\begin{align*}
    \text{maximize} \quad
        & \mathbf y\cdot \mathbf x^* + z \\
    \text{subject to} \quad
        & \mathbf y\cdot \mathbf 1_M + z \le 0
          \quad \forall M\in\mathcal M, \\
        & y_e \le 1 \quad \forall e\in E, \\
        & -y_e \le 1 \quad \forall e\in E .
\end{align*}
The bounds \(y_e\le 1\) and \(-y_e\le 1\) arise from the artificial variables
\(a_e^+\) and \(a_e^-\), respectively.

Since the full master LP has one variable \(p_M\) for each matching
\(M\in\mathcal M\), it has exponentially many variables. Column generation
therefore begins with a restricted set of matchings \(\mathcal K\subseteq
\mathcal M\), and solves the restricted Phase-I master obtained by replacing
\(\mathcal M\) by \(\mathcal K\):
\begin{align*}
    \text{minimize} \quad
        & \sum_{e\in E}(a_e^+ + a_e^-) \\
    \text{subject to} \quad
        & \sum_{M\in\mathcal K}p_M\mathbf 1_M
          +\mathbf a^+-\mathbf a^-=\mathbf x^*, \\
        & \sum_{M\in\mathcal K}p_M=1, \\
        & p_M\ge 0 \quad \forall M\in\mathcal K, \\
        & a_e^+\ge 0,\quad a_e^-\ge 0 \quad \forall e\in E .
\end{align*}
Let \(\mathbf y\in\mathbb R^{|E|}\) and \(z\in\mathbb R\) be the dual
variables associated with the equality constraints. The dual of the restricted
Phase-I master is
\begin{align*}
    \text{maximize} \quad
        & \mathbf y\cdot \mathbf x^* + z \\
    \text{subject to} \quad
        & \mathbf y\cdot \mathbf 1_M + z \le 0
          \quad \forall M\in\mathcal K, \\
        & y_e \le 1 \quad \forall e\in E, \\
        & -y_e \le 1 \quad \forall e\in E .
\end{align*}

Thus the restricted dual differs from the full dual only in that it contains
the matching constraints
\[
    \mathbf y\cdot \mathbf 1_M+z\le 0
\]
only for \(M\in\mathcal K\), rather than for all \(M\in\mathcal M\). The
pricing oracle checks whether any omitted matching constraint is violated.
This amounts to solving
\[
    W^*=\max_{M\in\mathcal M}\mathbf y\cdot \mathbf 1_M,
\]
which is precisely a maximum weight matching problem in \(G\) with edge weights
\(\mathbf y\). If \(W^*>-z\), then the maximizing matching \(M^*\) violates the
dual constraint
\[
    \mathbf y\cdot \mathbf 1_{M^*}+z\le 0,
\]
and we add \(M^*\) to \(\mathcal K\).

If \(W^*\le -z\), then
\[
    \mathbf y\cdot \mathbf 1_M+z\le 0
    \qquad \forall M\in\mathcal M.
\]
Hence the solution of the restricted dual is feasible for the full dual. Since
it is optimal for the restricted dual, and since the restricted dual is a
relaxation of the full dual, the restricted and full Phase-I optimal values are
equal. But the full Phase-I optimal value is zero because
\(\mathbf x^*\in Co(\mathcal M)\). Therefore the restricted Phase-I master also
has optimal value zero at termination. Consequently, the artificial variables
vanish and the remaining probabilities \(p_M\) satisfy
\[
    \sum_{M\in\mathcal K}p_M\mathbf 1_M=\mathbf x^*,
    \qquad
    \sum_{M\in\mathcal K}p_M=1,
    \qquad
    p_M\ge 0.
\]
Thus Algorithm~\ref{alg:decomp_colgen} returns a valid convex decomposition of
\(\mathbf x^*\) into matchings.

\subsection{Achievable Throughput}\label{sec: achievable}

For the algorithm to be able to provide service to a request queue, we need to have an entanglement in each of the entanglement queues at the vertices of the edge corresponding to the request queue. To characterize the probability with which this event happens, 
we consider a vertext $u$ and define a reference discrete-time Markov chain $\{\tilde{L}_u(t)\}_{t \ge 0}$ with state space $\{0, 1, \dots, B_u\}$ and a fixed service attempt probability:
$p_u = \lambda_u.$
The transition probabilities $P_{ij}$ for the Markov chain are derived by considering its state evolution through the three phases: service, decoherence and then arrivals, in that order. Let $L$ be the state at the start of the slot. 
First, the intermediate state after service is $K_{serv} = \max(0, L - S)$, where $S \in \{0,1\}$ is the service indicator. 
Second, the number of entanglements surviving decoherence is $K_{dec} \sim \text{Binom}(K_{serv}, 1-\mu_u)$. 
Finally, the state at the start of the next slot is $j = \min(B_u, K_{dec} + A)$, where $A \in \{0,1\}$ is the arrival indicator. 

The probability of transitioning from state $i$ to $j$ is given by:
\begin{equation}
    P_{ij} = \sum_{S=0}^1 \sum_{A=0}^1 \mathbb{P}(S)\mathbb{P}(A) \sum_{K_{dec}} \binom{\max(0, i-S)}{K_{dec}} (1-\mu_u)^{K_{dec}} \mu_u^{\max(0, i-S)-K_{dec}} \cdot \mathbb{I}(j = \min(B_u, K_{dec} + A))
\end{equation}
where $\mathbb{P}(S=1) = p_u$ and $\mathbb{P}(A=1) = \lambda_u$. if $\pi$ is the vector of stationary probabilities for this Markov chain, then the stationary probability of a entanglement being available is $C_u = 1 - \pi_0$.

\begin{lemma}[Availability Lower Bound via Coupling]
\label{lemma:coupling_bound}
Let $\mathbb{P}(L_u > 0, L_v > 0)$ be the stationary probability that the buffers at both endpoints of edge $e=(u,v)$ are non-empty under the randomized decomposition policy with fixed weights $\{w_e\}$. Let $C_u$ and $C_v$ be the steady-state availabilities of the reference chains $\tilde{L}_u$ and $\tilde{L}_v$ as defined above. Then:
\begin{equation}
    \mathbb{P}(L_u > 0, L_v > 0) \ge (C_u +C_v-1)^+.
\end{equation}
\end{lemma}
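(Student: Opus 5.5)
The bound has two ingredients. The first is the union bound,
\[
\mathbb{P}(L_u>0,L_v>0)\ \ge\ 1-\mathbb{P}(L_u=0)-\mathbb{P}(L_v=0)=\mathbb{P}(L_u>0)+\mathbb{P}(L_v>0)-1 .
\]
The second is a per-node statement: the marginal availability under the policy dominates that of the reference chain, i.e. $\mathbb{P}(L_u>0)\ge C_u$ for each $u$. Combining the two, and using that probabilities are nonnegative, gives $(C_u+C_v-1)^+$. Note that no independence between $u$ and $v$ is needed; the policy correlates the two buffers through the shared matchings. The real work is the per-node domination.

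For that step, fix $u$ and compare the true buffer $L_u(t)$ with the reference chain $\tilde L_u(t)$. The service attempt rate at $u$ under the randomized decomposition is
\[
\sum_{e\in\delta(u)}\mathbb{P}(e\in M(t))=\sum_{e\in\delta(u)}x^*_e\le\lambda_u=p_u ,
\]
by constraint \eqref{eq: LP2}. Since the matchings are drawn i.i.d. from $\{p_j\}$ in each slot, independently of everything else, the event ``some edge at $u$ is scheduled'' is Bernoulli with parameter $q_u\le p_u$, i.i.d. over slots. An actual removal at $u$ additionally requires $R_e>0$ and $L_{v'}>0$ at the other endpoint. Hence the actual service indicator $S_u(t)$ satisfies $S_u(t)\le \mathbb{I}(\text{$u$ scheduled})$.

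Build the coupling on one probability space. Use the same entanglement arrivals $Y_u(t)$ for both processes. Realize the reference service as $\tilde S(t)=\mathbb{I}(U_t\le p_u)$ and the scheduling event as $\mathbb{I}(U_t\le q_u)$ from a common uniform $U_t$; this is legitimate because the matching draw can be generated from $U_t$ together with independent randomness. Realize decoherence via per-slot ordered uniforms, so that a survivor count from a larger stock dominates that from a smaller one: with $K\le K'$, the count $\mathrm{Binom}(K,1-\mu_u)$ is at most $\mathrm{Binom}(K',1-\mu_u)$ when the first $K$ coins are shared.

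Then show by induction that $L_u(t)\ge \tilde L_u(t)$ for all $t$, starting both processes at the same state. Suppose $L\ge\tilde L$ at the start of a slot. After service we have $L-S_u\ge \tilde L-\tilde S$, with the reference truncated at $0$. If $L>\tilde L$ this holds since $S_u\le 1$. If $L=\tilde L$ it holds since $S_u\le\tilde S$. Monotone decoherence coupling, the shared arrival and the monotone map $\min(B_u,\cdot)$ preserve the order. Hence $\mathbb{P}(L_u(t)>0)\ge\mathbb{P}(\tilde L_u(t)>0)$ for all $t$.

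Finally pass to the limit. The reference chain is finite and irreducible/aperiodic whenever $0<\lambda_u$, and the joint system under fixed weights is positive recurrent on its entanglement component. Taking Cesàro or stationary limits therefore yields $\mathbb{P}(L_u>0)\ge 1-\pi_0=C_u$.

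I expect the main obstacle to be the coupling of the service events. The scheduling at $u$ is Bernoulli($q_u$), but the actual service depends on the state of neighbors and on request queues. The plan handles this by dominating the actual service by the scheduling event, and that event by the reference event. The care needed is that decoherence must be coupled monotonically within the same slot, which requires generating the binomials from shared coins rather than independently.
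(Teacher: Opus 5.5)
Your proposal is correct and follows essentially the same route as the paper. Both arguments fix a node $u$ and couple the scheduling event with the reference service through a common uniform, using $\sum_{e\in\delta(u)}x^*_e\le\lambda_u$. This gives the pathwise dominance $L_u(t)\ge\tilde L_u(t)$, hence $\mathbb{P}(L_u>0)\ge C_u$, and then the inclusion--exclusion bound $\mathbb{P}(L_u>0,L_v>0)\ge\mathbb{P}(L_u>0)+\mathbb{P}(L_v>0)-1$ finishes. Your version also makes explicit several points the paper leaves implicit: generating the matching consistently from the node's uniform, the monotone coupling of the binomial decoherence via shared indexed coins, the two-case induction step, and the passage to the stationary limit.
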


\begin{proof}
Fix a node $u$. Under our randomized decomposition policy, the probability that
$u$ is scheduled in a given slot is
\[
a_u := \sum_{e\in\delta(u)} x_e^* \le \lambda_u =: p_u,
\]
where the inequality follows from the LP constraints.

For each node $u$ and slot $t$, let $U_u(t)\sim{\rm Unif}[0,1]$ be i.i.d. over $u,t$,
independent of arrivals and decoherence. Define Bernoulli indicators
\[
S_u(t):=\mathbb{I}\{U_u(t)\le a_u\},\qquad
\tilde S_u(t):=\mathbb{I}\{U_u(t)\le p_u\}.
\]
Then $S_u(t)\le \tilde S_u(t)$ almost surely for all $t$. Construct the reference
chain $\tilde L_u(t)$ to evolve with the \emph{same} arrival and decoherence events
as $L_u(t)$ at node $u$, but with removals driven by $\tilde S_u(t)$ (i.e., whenever
$\tilde S_u(t)=1$ and $\tilde L_u(t)>0$, one entanglement is removed before decoherence).
Since the actual process can remove at most one entanglement from $u$ per slot, and $S_u(t)\le\tilde S_u(t)$
pathwise, this coupling implies the sample-path dominance
\[
L_u(t)\ge \tilde L_u(t)\qquad \forall t.
\]
Thus, $P(L_u>0)\geq C_u.$ The result follows from the fact $$1\geq P(L_u>0\,\, \mathrm{or}\,\,  L_v>0)=P(L_u>0)+P(L_v>0)-P(L_u>0,L_v>0).$$
\end{proof}

Next, as in \cite{Bhambay2025}, we define a request-agnostic policy $\pi$ as one that selects matching $M$ with probability $\pi(M|\mathbf{L})$ when the entanglement queue length vector is in state $\mathbf{L}.$ Let $d_\pi$ be the stationary distribution of the entanglement queues under such a policy. Recall that $\mathbf{\nu}$ is the vector of request arrival rates. Following \cite{Bhambay2025}, the capacity region of the switch is defined as follows: 
$$\mathcal{C}=\{\mathbf{\nu}: \exists \pi \textrm{ such that } \nu_e <\sum_{\mathbf{L}:l_{u_e}>0, l_{v_e}>0} \sum_{M\ni e} \pi(M|\mathbf{L})d_\pi(L)\quad \forall e\in E\},
$$
where we have used the notation $u_e, v_e$ to denote the vertices associated with edge $e.$ Define
$
\Gamma_{\mathrm{coh}} := \min_{e=(u,v)\in E}( C_u +C_v -1)^+.
$
Next, we show that our proposed algorithm achieves at least $\Gamma_{\mathrm{coh}}$ fraction of the capacity region. Note that $\Gamma_{\mathrm{coh}}$ can potentially be zero for certain values of the problem parameters. Later, we will demonstrate using a combination of theory and simulations that for switch parameters of interest, $\Gamma_{\mathrm{coh}}$ is a non-trivial fraction.

\begin{theorem}\label{thm: throughput}
Suppose that $\frac{(1+\epsilon)}{\Gamma_{\text{coh}}}\mathbf{\nu} \in \mathcal{C}$ and consider the scheduling algorithm proposed at the end of Section~\ref{sec: proposed}. Define the Lyapunov function $$\ve(\mathbf{R})=\frac{1}{2}\sum_e R_e^2.$$ Then, $\exists T_\epsilon, \delta>0$ and a finite set $\mathcal{B}$ such that $\forall k\in\{0,1,2,\ldots\}$
$$E\left(\ve(\mathbf{R}((k+1)T_\epsilon))-\ve(\mathbf{R}(kT_\epsilon))|\mathbf{L}(kT_\epsilon),\mathbf{R}(kT_\epsilon)\right)\leq -\delta$$
whenever $R(kT_\epsilon)\in \mathcal{B}^c.$
\end{theorem}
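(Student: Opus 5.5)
We will prove the drift bound with a $T$-step Lyapunov argument. The policy freezes the weights $w_e=R_e(kT)$ over the frame, so we compare the service it actually delivers in the frame with what the LP optimum promises. Write $t_0=kT$ and expand $\ve(\mathbf R(t_0+T))-\ve(\mathbf R(t_0))$ using $R_e(t+1)=R_e(t)-S_e(t)+A_e(t)$. Because $|R_e(t)-R_e(t_0)|\le T\max(1,A_{\max})$ inside the frame (with second moments of $A_e$ bounded), the drift is at most
\[
\sum_e R_e(t_0)\,\mathbb E\Big[\sum_{t=t_0}^{t_0+T-1}(A_e(t)-S_e(t))\,\Big|\,\mathbf L(t_0),\mathbf R(t_0)\Big] + K_1T^2 ,
\]
with $K_1$ depending only on $|E|$ and $\sigma_e^2,\nu_e$. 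The arrival term is simply $T\sum_e R_e(t_0)\nu_e$. The work is in lower bounding the expected service.

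\emph{Service lower bound.} Within the frame the matchings are i.i.d.\ with marginals $\mathbf x^*$, independent of the entanglement dynamics. Hence edge $e$ is served at slot $t$ with probability $x_e^*\,\mathbb P(L_u(t)>0,L_v(t)>0)$ whenever $R_e(t)>0$. Edges with $R_e(t_0)< T$ may have empty queues, but they contribute only $O(T^2)$ to the weighted sum and are absorbed in the constant. Under the frozen policy, $\mathbf L$ is a finite-state chain whose marginal at node $u$ is fed by a Bernoulli removal of rate $a_u\le\lambda_u$. The coupling of Lemma~\ref{lemma:coupling_bound} holds pathwise from any initial condition once the reference chain is started at $\tilde L_u(t_0)=0\le L_u(t_0)$. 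So $\mathbb P(L_u(t)>0)\ge \mathbb P(\tilde L_u(t)>0)$, where $\tilde L_u$ is an irreducible aperiodic finite chain started from $0$. It therefore converges geometrically to $C_u$, uniformly in the initial state, with constants depending only on $(\lambda_u,\mu_u,B_u)$. Averaging over the frame gives
\[
\frac1T\sum_{t}\mathbb P(L_u(t)>0,L_v(t)>0)\ge (C_u+C_v-1)^+ - \frac{K_2}{T}\ge \Gamma_{\mathrm{coh}}-\frac{K_2}{T}.
\]
This transient step is the main obstacle. The system does not start in stationarity, and the Lemma is stated only for stationary probabilities. We handle this by applying the pathwise coupling from time $t_0$ with the reference chain started empty, which makes the bound uniform over $\mathbf L(t_0)$.

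\emph{Comparison with capacity.} Take $\nu' := (1+\epsilon)\nu/\Gamma_{\mathrm{coh}}\in\mathcal C$, witnessed by some request-agnostic $\pi$. Define $y_e=\sum_{\mathbf L}\sum_{M\ni e}\pi(M|\mathbf L)d_\pi(\mathbf L)\mathbb I(l_{u_e},l_{v_e}>0)$. Then $\mathbf y$ is a convex combination of matching indicators, so it satisfies the blossom constraints. It also satisfies $\sum_{e\in\delta(v)}y_e\le\lambda_v$, because in stationarity the service rate at $v$ cannot exceed the entanglement arrival rate. Thus $\mathbf y$ is feasible for \eqref{eq: LP1}--\eqref{eq: LP3}, and by LP optimality $\sum_e w_ex_e^*\ge\sum_e w_e y_e\ge \sum_e w_e\nu'_e$. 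Combining these bounds, the expected weighted service over the frame is at least
\[
T\Big(\Gamma_{\mathrm{coh}}-\tfrac{K_2}{T}\Big)\tfrac{1+\epsilon}{\Gamma_{\mathrm{coh}}}\sum_e R_e(t_0)\nu_e - K_3T^2 .
\]
Choose $T_\epsilon$ so that $K_2(1+\epsilon)/(T_\epsilon\Gamma_{\mathrm{coh}})\le\epsilon/2$. The drift is then at most $-\tfrac{\epsilon}{2}T_\epsilon\sum_eR_e(t_0)\nu_e + K T_\epsilon^2$. Edges with $\nu_e=0$ would need separate handling, for instance by a small strict-interior slack from the strict inequality in $\mathcal C$ that yields a uniform positive rate. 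Setting $\mathcal B=\{\mathbf R:\ \tfrac{\epsilon}{2}T_\epsilon\min_e\nu_e\sum_eR_e\le KT_\epsilon^2+\delta\}$ gives a finite set outside which the drift is at most $-\delta$.
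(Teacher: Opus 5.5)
Your argument is correct. It shares the paper's skeleton: a frame-level quadratic drift with weights frozen at $\mathbf R(kT)$, an $O(T^2)$ penalty for edges with $R_e(kT)<T$, and LP-feasibility of the witnessing policy's stationary service vector, which gives $\sum_e R_e x_e^*\ge\frac{1+\epsilon}{\Gamma_{\mathrm{coh}}}\sum_e R_e\nu_e$. Where you differ is in how you handle the within-frame transient.

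The paper uses a Doeblin bound for the joint entanglement chain under the frozen $\{p_M\}$. This replaces the time-$t$ availability probabilities by their values under $d_\pi$, at an additive cost $C'$ per edge, and only then applies Lemma~\ref{lemma:coupling_bound} to $d_\pi$. You instead run the lemma's per-node pathwise coupling from the frame start, with each reference chain started at $0\le L_u(kT)$. Then $\mathbb P(L_u(t)>0)\ge\mathbb P(\tilde L_u(t)>0)$ holds at every slot of the frame, and only the geometric convergence of a single-node reference chain is needed.

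Your route buys several things.
\begin{itemize}
\item You never need $\mathbf L$ to evolve as an autonomous Markov chain under the frozen policy. Strictly it does not, since an entanglement is consumed only when the scheduled edge has a waiting request. Your coupling uses only that the actual removal at $u$ is at most the indicator that $u$ is matched.
\item The transient error enters as $x_e^*K_2$, multiplied by the LP solution. The drift is therefore $-\tfrac{\epsilon}{2}T\sum_e R_e\nu_e+KT^2$, with no free-standing $C'\sum_e R_e$ term.
\item Your $T_\epsilon$ depends only on $\epsilon$, $\Gamma_{\mathrm{coh}}$ and the reference chains' mixing constants, not on $\nu_{\min}$.
\item Your strict-slack remark ($y_e\ge\nu'_e+\eta$ for all $e$, for some $\eta>0$, from the strict inequality in $\mathcal C$) also covers edges with $\nu_e=0$. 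The paper's Step~5 leaves these uncontrolled, because its $C'\sum_e R_e$ term includes them.
\end{itemize}
The paper's route is more modular: it uses Lemma~\ref{lemma:coupling_bound} exactly as stated and compares directly with the stationary quantity that defines $\mathcal C$.

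One small correction: the model assumes only finite variance of $A_e$, so there is no $A_{\max}$. You do not need it. Expanding the squared frame increment and applying Cauchy--Schwarz, as the paper does, gives the $K_1T^2$ term from second moments alone.
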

\begin{proof}
We will present the proof in several steps.

\textit{Step 1: Uniform Mixing of Entanglement Queues.} 
We use the uniform mixing property of the entanglement queues proved in \cite{Bhambay2025}. For any fixed matching policy $\pi$, the entanglement process $\{\mathbf{L}(t)\}$ is a Markov chain on the finite state space $\mathcal{S} = \prod_v\{0, \dots, B_v\}$. Let $P_\pi$ be its transition matrix. Since $\mu_v, \lambda_v \in (0,1)$, the state $\mathbf{0}$ (all buffers empty) is reachable from any state $\mathbf{L} \in \mathcal{S}$ in one time slot via the the decoherence of all entanglements after service and the absence of arrivals. Specifically, for all $\pi \in \Pi$ and $\mathbf{L} \in \mathcal{S}$:
\begin{equation}
    P_\pi(\mathbf{L}, \mathbf{0}) \ge \prod_{v \in V} (1-\lambda_v)\mu_v^{B_v} \triangleq \eta > 0,
\end{equation}
independent of $\pi.$ Thus, by the standard Doeblin condition, the chain is uniformly ergodic with a contraction coefficient $\rho :=1-\eta < 1$. Consequently,  $\|P_\pi^t(\mathbf{L}, \cdot) - d_\pi(\cdot)\|_{TV} \le \rho^t$ $\forall \mathbf{L}, t\geq 0, \pi.$

\textit{Step 2: $T$-Step Drift Decomposition.}
Since departures occur before arrivals and $S_e(t)$ denotes \emph{actual} service (departures),
we have $S_e(t)\le R_e(t)$ a.s., and hence the request queue evolves as
\[
R_e(t+1)=R_e(t)-S_e(t)+A_e(t),
\]
without the need for a projection operator.

Fix a frame index $k$ and a frame size $T$. Iterating over the frame yields
\[
R_e((k+1)T)
=
R_e(kT)+\sum_{\tau=kT}^{(k+1)T-1}\big(A_e(\tau)-S_e(\tau)\big).
\]
Squaring both sides and subtracting $R_e^2(kT)$ gives
\[
R_e^2((k+1)T)-R_e^2(kT)
=
2R_e(kT)\sum_{\tau=kT}^{(k+1)T-1}\big(A_e(\tau)-S_e(\tau)\big)
+
\Big(\sum_{\tau=kT}^{(k+1)T-1}\big(A_e(\tau)-S_e(\tau)\big)\Big)^2 .
\]
Summing over all edges and taking conditional expectations given $(\mathbf{L}(kT),\mathbf{R}(kT))$,
the $T$-step Lyapunov drift
\[
\Delta_T :=
\mathbb E\!\left[\mathcal V(\mathbf R((k+1)T))-\mathcal V(\mathbf R(kT))\mid \mathbf L(kT),\mathbf R(kT)\right]
\]
satisfies
\begin{equation}\label{eq:Tstep-drift-basic}
\Delta_T
\le
\sum_e R_e(kT)\,
\mathbb E\!\left[\sum_{\tau=kT}^{(k+1)T-1}\big(A_e(\tau)-S_e(\tau)\big)\right]
+
\frac12\sum_e
\mathbb E\!\left[\Big(\sum_{\tau=kT}^{(k+1)T-1}\big(A_e(\tau)-S_e(\tau)\big)\Big)^2\right].
\end{equation}

By Cauchy--Schwarz,
\[
\Big(\sum_{\tau=kT}^{(k+1)T-1}(A_e(\tau)-S_e(\tau))\Big)^2
\le
T\sum_{\tau=kT}^{(k+1)T-1}(A_e(\tau)-S_e(\tau))^2.
\]
Since $S_e(\tau)\in\{0,1\}$ and $\mathbb E[A_e^2]=\sigma_e^2+\nu_e^2$, we have
\[
\mathbb E[(A_e(\tau)-S_e(\tau))^2]
\le
2\mathbb E[A_e(\tau)^2]+2\mathbb E[S_e(\tau)^2]
\le
2(\sigma_e^2+\nu_e^2)+2.
\]
Therefore,
\[
\frac12\sum_e
\mathbb E\!\left[\Big(\sum_{\tau=kT}^{(k+1)T-1}(A_e(\tau)-S_e(\tau))\Big)^2\right]
\le
K_T,
\qquad
K_T:=\frac{T^2}{2}\sum_e\big(2(\sigma_e^2+\nu_e^2)+2\big)=O(T^2).
\]

Define the \emph{potential} service indicator
\[
\hat S_e(t):=\mathbf 1\{e\in M(t),\,L_{u_e}(t)>0,\,L_{v_e}(t)>0\},
\]
which ignores request-queue emptiness. Then $S_e(t)=\hat S_e(t)\mathbf 1\{R_e(t)>0\}\le \hat S_e(t)$.
Moreover, since $S_e(t)\le 1$ and arrivals are nonnegative, we have the deterministic bound
\[
R_e(kT+\tau)\ge R_e(kT)-\tau,\qquad \tau=0,1,\dots,T-1.
\]
Hence, if $R_e(kT)\ge T$, then $R_e(kT+\tau)>0$ for all $\tau\in\{0,\dots,T-1\}$ and therefore
$S_e(kT+\tau)=\hat S_e(kT+\tau)$ throughout the frame. It follows that for every edge $e$,
\begin{equation}\label{eq:S-vs-Shat-sum}
\sum_{\tau=0}^{T-1} S_e(kT+\tau)
\ge
\sum_{\tau=0}^{T-1} \hat S_e(kT+\tau)
-
T\,\mathbf 1\{R_e(kT)<T\}.
\end{equation}
Multiplying \eqref{eq:S-vs-Shat-sum} by $R_e(kT)$ and using $R_e(kT)\mathbf 1\{R_e(kT)<T\}\le T$ gives
\begin{equation}\label{eq:weighted-S-vs-Shat}
R_e(kT)\sum_{\tau=0}^{T-1} S_e(kT+\tau)
\ge
R_e(kT)\sum_{\tau=0}^{T-1} \hat S_e(kT+\tau)
-
T^2.
\end{equation}
Summing \eqref{eq:weighted-S-vs-Shat} over $e$ yields an additive penalty of at most $|E|T^2$,
which can be absorbed into the $O(T^2)$ term $K_T$.

Conditional on $\mathbf R(kT)$, the distribution $\{p_M\}$ is fixed over the frame.
Thus, for $\tau'=0,1,\dots,T-1$,
\[
\mathbb P(\hat S_e(kT+\tau')=1 \mid \mathbf L(kT),\mathbf R(kT))
=
\sum_{M\ni e} p_M\,
\mathbb P(L_{u_e}(kT+\tau')>0,L_{v_e}(kT+\tau')>0 \mid \mathbf L(kT)).
\]
Let
\[
\mu_e(\mathbf R(kT)):=\sum_{M\ni e} p_M\, d_\pi(L_{u_e}>0,L_{v_e}>0)
\]
denote the corresponding stationary potential service probability under the entanglement-chain stationary
distribution $d_\pi$ induced by these fixed probabilities $\{p_M\}$.
By the uniform mixing bound from Step~1, there exists a constant $C'>0$ such that, uniformly over $\mathbf L(kT)$,
\[
\sum_{\tau'=0}^{T-1}\mathbb E[\hat S_e(kT+\tau')\mid \mathbf L(kT),\mathbf R(kT)]
\ge
T\,\mu_e(\mathbf R(kT)) - C'.
\]
Combining this with \eqref{eq:weighted-S-vs-Shat} and summing over $e$ yields
\[
\sum_e R_e(kT)\,\mathbb E\!\left[\sum_{\tau=kT}^{(k+1)T-1} S_e(\tau)\right]
\ge
T\sum_e R_e(kT)\mu_e(\mathbf R(kT))
-
C'\sum_e R_e(kT)
-
|E|T^2.
\]
Substituting the above bound (and $\mathbb E[\sum_{\tau}A_e(\tau)]=T\nu_e$) into \eqref{eq:Tstep-drift-basic}, and, abusing notation and absorbing the additional $|E|T^2$ term into $K_T$, we obtain the drift bound
\[
\Delta_T
\le
T\sum_e R_e(kT)\big(\nu_e-\mu_e(\mathbf R(kT))\big)
+
C'\sum_e R_e(kT)
+
K_T,
\]
where $K_T=O(T^2)$ is independent of $\mathbf L(kT)$ and $\mathbf R(kT)$.

\textit{Step 3: Lower bounding the policy service rate.}
Fix a frame $k$ and consider the matching distribution $\{p_M\}$ computed at time $kT$.
Let
\[
\mu_e(\mathbf R(kT))
:=
\sum_{M\ni e} p_M\, d_\pi(L_{u_e}>0,L_{v_e}>0)
\]
denote the stationary \emph{potential} service probability for edge $e=(u_e,v_e)$
under the entanglement-chain stationary distribution $d_\pi$ induced by $\{p_M\}$.
By construction of the decomposition in Section~\ref{sec: proposed},
\[
\sum_{M\ni e} p_M = x^*_e,
\]
where $\mathbf x^*$ is the optimal solution of the LP
\eqref{eq: LP1}--\eqref{eq: LP3} with weights $\mathbf R(kT)$.
By Lemma~\ref{lemma:coupling_bound},
$d_\pi(L_{u_e}>0,L_{v_e}>0)\ge (C_{u_e}+C_{v_e}-1)^+\ge \Gamma_{\mathrm{coh}}$,
and therefore
\begin{equation}\label{eq:mu-lower-step3}
\mu_e(\mathbf R(kT))
\ge
\Gamma_{\mathrm{coh}}\,x^{*}_{e}.
\end{equation}
Multiplying \eqref{eq:mu-lower-step3} by $R_e(kT)$ and summing over $e$ yields
\begin{equation}\label{eq:weighted-mu-lower-step3}
\sum_e R_e(kT)\,\mu_e(\mathbf R(kT))
\ge
\Gamma_{\mathrm{coh}}
\sum_e R_e(kT)\,x^{*}_{e}.
\end{equation}

\medskip
\textit{Step 4: Relating the relaxed LP value to the arrival rates.}
By the assumption of the theorem,
\[
\Gamma_{\mathrm{coh}}(1+\epsilon)\boldsymbol{\nu}\in\mathcal C.
\]
Hence there exists a request-agnostic stationary policy $\pi^\dagger$ whose
steady-state \emph{successful} service rates
$\mathbf s^\dagger=\{s^\dagger_e\}_{e\in E}$ satisfy
\[
s^\dagger_e \ge \frac{(1+\epsilon)}{\Gamma_{\mathrm{coh}}}\nu_e,
\qquad \forall e\in E.
\]
The vector $\mathbf s^\dagger$ represents the steady-state successful
service rates under policy $\pi^\dagger$. In each slot, the set of
successfully served edges forms a matching; therefore the long-run
average service rate vector lies in the convex hull of matchings,
i.e., $\mathbf s^\dagger \in Co(\mathcal M)$.
Moreover, since at most one entanglement can be consumed per node per slot,
we have $\sum_{e\in\delta(v)} s^\dagger_e \le \lambda_v$ for all $v$.
Hence $\mathbf s^\dagger$ is feasible for the LP
\eqref{eq: LP1}--\eqref{eq: LP3}.
By optimality of
$\mathbf x^*$ for weights $\mathbf R(kT)$,
\begin{equation}\label{eq:rel-dominates-sdagger-step4}
\sum_e R_e(kT)\,x^{*}_{e}
\ge
\sum_e R_e(kT)\,s^\dagger_e
\ge
\frac{(1+\epsilon)}{\Gamma_{\mathrm{coh}}}
\sum_e R_e(kT)\,\nu_e.
\end{equation}

\medskip
\textit{Step 5: Completing the drift argument.}
Combining \eqref{eq:weighted-mu-lower-step3} and
\eqref{eq:rel-dominates-sdagger-step4} gives
\[
\sum_e R_e(kT)\,\mu_e(\mathbf R(kT))
\ge
(1+\epsilon)\sum_e R_e(kT)\,\nu_e.
\]
Substituting this bound into the $T$-step drift inequality from Step~2,
\[
\Delta_T
\le
T\sum_e R_e(kT)\big(\nu_e-\mu_e(\mathbf R(kT))\big)
+
C'\sum_e R_e(kT)
+
K_T,
\]
yields
\[
\Delta_T
\le
-\epsilon T\sum_e R_e(kT)\,\nu_e
+
C'\sum_e R_e(kT)
+
K_T.
\]
Let $\nu_{\min}:=\min\{\nu_e:\nu_e>0\}$.
Then $\sum_e R_e\nu_e\ge \nu_{\min}\sum_{e:\nu_e>0}R_e$.
Choosing $T_\epsilon$ such that $\epsilon T_\epsilon\nu_{\min}>C'$,
there exist $\gamma,\delta>0$ and a finite set $\mathcal B$ for which
$\Delta_{T_\epsilon}\le -\delta$ whenever $\mathbf R(kT_\epsilon)\notin\mathcal B$.
\end{proof}

The above theorem can then be used to show positive recurrence of the Markov chain or other notions of stability used in the literature using the Foster-Lyapunov theorem and related techniques \cite{srikant2014communication}. Further, the Lyapunov drift can also be useful to obtain  bounds on queue lengths as in \cite{eryilmaz2012asymptotically,maguluri2016heavy}. We note that, instead of using a $T_\epsilon$ that depends on $\epsilon,$ one can also choose an adaptive frame size based on the queue length as proposed in \cite{Bhambay2026}. The same throughput guarantees would continue to hold following arguments similar to \cite{Bhambay2026}.

In the next subsection, we will explore how $\Gamma_{\mathrm{coh}}$ varies as a function of the switch parameters: the arrival rates of entanglements, the decoherence probabilities and buffer sizes.

\subsection{A Lower Complexity Algorithm}\label{sec: lower}
\label{sec:simpler}

 Now, we preent a lower complexity algorithm. We onsider the degree-only LP relaxation, without the blossom inequalities:
\begin{align}
\text{maximize}\quad & \sum_{e\in E} w_e(t)\,x_e \label{eq:simpleLP1}\\
\text{subject to}\quad & \sum_{e\in\delta(v)} x_e \le \lambda_v,\qquad \forall v\in V \label{eq:simpleLP2}\\
& x_e\ge 0,\qquad \forall e\in E. \label{eq:simpleLP3}
\end{align}
Let $\mathbf x^{\rm frac}$ denote an optimal solution. In general, $\mathbf x^{\rm frac}$
need not lie in the matching polytope $Co(\mathcal M)$ because the blossom inequalities
may be violated. Nevertheless, a standard argument in polyhedral combinatorics implies that the scaled vector
\[
\mathbf x^{\rm app}:=\tfrac23\,\mathbf x^{\rm frac}
\]
is feasible for the matching polytope, i.e., $\mathbf x^{\rm app}\in Co(\mathcal M)$.
While this appears to be a well known result, a proof does not appear in commonly used texts in the field; hence, we sketch it here for completeness: 
\begin{itemize}
    \item The extreme points of the degree-only
fractional matching polytope are half-integral, and the edges with value $1/2$
form node-disjoint odd cycles \cite{balinski1970maximum,balas1981integer}.
\item On any odd cycle of length $2k+1$, the unscaled solution has total weight $(2k+1)/2$
on the cycle edges, which violates the odd-set (blossom) bound $k$; scaling by $2/3$
reduces this to $(2k+1)/3\le k$, hence all blossom inequalities
\[
\sum_{e\in E(S)} x_e \le \frac{|S|-1}{2},\qquad \forall S\subseteq V\ \text{odd},\ |S|\ge 3
\]
are satisfied, and therefore $\mathbf x^{\rm app}\in Co(\mathcal M)$ \cite{Edmonds65}.
\end{itemize}

Since $\mathbf x^{\rm app}\in Co(\mathcal M)$, it admits a convex decomposition into
integral matchings, and we may implement the same randomized matching schedule as in
Section~\ref{sec: proposed}, with $\mathbf x^*$ replaced by $\mathbf x^{\rm app}$.
The resulting node activation probabilities satisfy
\[
a_v:=\sum_{e\in\delta(v)} x^{\rm app}_e \le \tfrac23\,\lambda_v,
\]
so the coupling argument of Lemma~\ref{lemma:coupling_bound} applies with reference
service-attempt probability $p_v=\tfrac23\,\lambda_v$, yielding a coherence factor
$\Gamma'_{\rm coh}$ defined analogously.

The main advantage of the simpler algorithm is computational: 
\eqref{eq:simpleLP1}--\eqref{eq:simpleLP3} contains only the $|V|$ degree
constraints,
and can therefore be solved efficiently in practice using standard LP solvers. 
Let $\Gamma'_{\mathrm{coh}}$ denote the coherence factor
computed using the reference entanglement chain with service-attempt
probability $p_v=\tfrac23\,\lambda_v$ at each node $v$.
Then, by the same Lyapunov-drift argument as in
Theorem~\ref{thm: throughput}, the simpler algorithm stabilizes
all arrival-rate vectors $\boldsymbol{\nu}$ satisfying
\[
\frac{(1+\epsilon)}{\tfrac23\,\Gamma'_{\mathrm{coh}}}\,\boldsymbol{\nu}
\in \mathcal C.
\]
In other words, this algorithm guarantees stability for a constant
$\tfrac23\Gamma_{\mathrm{coh}}'$  of the
capacity region $\mathcal C$ defined in
Section~\ref{sec: achievable}. 

If
\[
    \sum_{v\in S}\lambda_v \le |S|-1
    \qquad
    \text{for every odd set } S\subseteq V,\ |S|\ge 3,
\]
then the blossom inequalities are redundant. In particular, if
\(\lambda_v\le 2/3\) for all \(v\in V\), then all blossom inequalities are
implied by the node constraints.\footnote{We thank Chandra Chekuri for the observation that blossom inequalities are redundant for sufficiently small $\lambda_v.$}

To see this, fix an odd set \(S\subseteq V\). Summing the node constraints over
all \(v\in S\), we obtain
\[
    \sum_{v\in S}\sum_{e\in\delta(v)}x_e
    \le
    \sum_{v\in S}\lambda_v .
\]
Each edge in \(E(S)\) is counted twice in the left-hand side, while edges with
exactly one endpoint in \(S\) are counted once. Since \(x_e\ge 0\), it follows
that
\[
    2\sum_{e\in E(S)}x_e
    \le
    \sum_{v\in S}\sum_{e\in\delta(v)}x_e
    \le
    \sum_{v\in S}\lambda_v .
\]
Therefore,
\[
    \sum_{e\in E(S)}x_e
    \le
    \frac12\sum_{v\in S}\lambda_v .
\]
If \(\sum_{v\in S}\lambda_v\le |S|-1\), then
\[
    \sum_{e\in E(S)}x_e
    \le
    \frac{|S|-1}{2},
\]
which is precisely the blossom inequality for \(S\). Finally, if
\(\lambda_v\le 2/3\) for all \(v\), then for every odd set \(S\) with
\(|S|\ge 3\),
\[
    \sum_{v\in S}\lambda_v
    \le
    \frac{2}{3}|S|
    \le
    |S|-1,
\]
where the last inequality holds because \(|S|\ge 3\). Thus the blossom
inequalities are redundant under the uniform sufficient condition
\(\lambda_v\le 2/3\) for all \(v\). In our numerical results, considering technological constraints in the near future, we will only consider $\lambda_v\leq 2/3$ $\forall v.$ Thus, for practical purposes, one can use the lower complexity algorithm in this section and achieve $\Gamma_{coh}$ fraction of the capacity region.

\section{Throughput Performance as a function of Switch Parameters}\label{sec: throughput}

As technology improves, it is expected that entanglement arrival rates will increase and decoherence rates will decrease. Further, buffer sizes are also expected to increase over time. In this subsection, we will consider a single node and study its availability $C_u$ as a function of the switch parameters $\lambda_u, \mu_u, B_u$. Since $\Gamma_{\mathrm{coh}}$ is going to be determined by the vertices with smallest $C_u$ in the switch, we will consider one vertex and drop the subscript $u$ in this section. Additionally, we will add a superscript $C^B$ to indicate the lower bound on availability at a vertex with buffer size $B.$ Motivated by results for traditional switches in \cite{weller1997scheduling,DaiPrabhakar2000}, we will characterize the set of problem parameters for which $\Gamma_{\mathrm{coh}}\geq 0.75$ so that $(2C-1)^+\geq 0.5.$ 

Our study of $\Gamma_{\mathrm{coh}}$ as a function of the problem parameters will be carried out in three steps. 
\begin{itemize}
    \item First, we will show that $C^B$ is an increasing function of $B$ for a fixed $\lambda,\mu.$
    \item Next, we will show that $C^B$ approaches $C^B$ exponentially fast in $B,$ This implies that one can achieve $C^\infty$ with a small buffer size. 
    \item Finally, by numerically calculating the stationary distribution of the reference chain presented in Section~\ref{sec: achievable}, we will characterize $\Gamma_{\mathrm{coh}}$ as a function of $\lambda, \mu, B$. The reasons will show that $\Gamma_{\mathrm{coh}}\geq 0.5$ for practical values of the system parameters. The numerical results will also show that $C^\infty$ can be achieved with relatively small buffer sizes as the theory suggests.
\end{itemize}

We start by establishing the monotonicity of $C^B,$ which follows from a standard coupleing argument.
\begin{proposition}
\label{prop:Cu_monotone_B}
For $B_1\le B_2$, we have $C^{B_1}\le C^{B_2}$. Moreover,
$
C^{B}\uparrow C^\infty$ as $B\to\infty.$$
$
\end{proposition}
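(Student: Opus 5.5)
We prove monotonicity with a sample-path coupling of two reference chains that share all randomness and differ only in the buffer cap. Fix $\lambda,\mu$ and $p=\lambda$ (or $p=\tfrac23\lambda$; the argument does not depend on the value of $p$), and let $B_1\le B_2$. On one probability space we generate, for each slot $t$, a service indicator $S(t)\sim\mathrm{Bern}(p)$, an arrival indicator $A(t)\sim\mathrm{Bern}(\lambda)$, and an infinite sequence of i.i.d.\ decoherence coins $\xi_1(t),\xi_2(t),\dots\sim\mathrm{Bern}(1-\mu)$ (survival). We run $\tilde L^{(1)}$ with cap $B_1$ and $\tilde L^{(2)}$ with cap $B_2$, both started from $0$. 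In each chain, if $K$ entanglements remain after service, the $i$-th of them survives iff $\xi_i(t)=1$. Hence the post-decoherence count is $K_{dec}=\sum_{i=1}^{K}\xi_i(t)$, which has the required $\mathrm{Binom}(K,1-\mu)$ law, so each chain is marginally the reference chain of Section~\ref{sec: achievable}.

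\textbf{Monotonicity.} We show by induction that $\tilde L^{(1)}(t)\le \tilde L^{(2)}(t)$ for all $t$. The one-slot update is
\[
f_B(L)=\min\Big(B,\ \textstyle\sum_{i=1}^{\max(0,L-S)}\xi_i+A\Big).
\]
Each of its stages is nondecreasing in $L$: $L\mapsto\max(0,L-S)$, then $K\mapsto\sum_{i\le K}\xi_i$, then adding $A$, then truncating at $B$. Moreover $f_{B_1}\le f_{B_2}$ pointwise, since $\min(B_1,\cdot)\le\min(B_2,\cdot)$. Therefore $L^{(1)}\le L^{(2)}$ implies $f_{B_1}(L^{(1)})\le f_{B_1}(L^{(2)})\le f_{B_2}(L^{(2)})$, which closes the induction. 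It follows that $\mathbb P(\tilde L^{(1)}(t)>0)\le\mathbb P(\tilde L^{(2)}(t)>0)$ for every $t$. Each finite chain is irreducible and aperiodic: state $0$ is reached from any state in one step with probability at least $(1-\lambda)\mu^{B}>0$, the same Doeblin argument as in Step 1 of Theorem~\ref{thm: throughput}. Letting $t\to\infty$ and using convergence to stationarity gives $C^{B_1}\le C^{B_2}$.

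\textbf{Limit.} This is the step needing the most care, because $C^\infty$ must be defined and the limit identified. We define $C^\infty:=1-\pi^\infty_0$, where $\pi^\infty$ is the stationary law of the uncapped chain ($B=\infty$). That chain is positive recurrent because decoherence gives a negative drift for large $L$: the expected next state is at most $(1-\mu)L+1$. Since $C^B$ is nondecreasing and bounded by $1$, the limit $\lim_B C^B$ exists. To identify it we extend the coupling to include the uncapped chain $\tilde L^{(\infty)}$, so that $\tilde L^{(B)}(t)\le\tilde L^{(\infty)}(t)$ for all $t$, which yields $C^B\le C^\infty$.

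For the reverse inequality we use that $\tilde L^{(B)}(t)=\tilde L^{(\infty)}(t)$ for all $t$ up to the first time the uncapped chain reaches level $B$. Let $\tau$ be a regeneration time of the uncapped chain, namely the return time to $0$, which has finite mean. On each regeneration cycle, the capped and uncapped paths coincide unless that cycle reaches level $B$. The probability that a cycle reaches $B$ tends to $0$ as $B\to\infty$. By the renewal-reward formula and dominated convergence (cycle lengths are integrable), the long-run fractions of time at state $0$ converge, so $\pi^B_0\to\pi^\infty_0$. Alternatively, tightness of $\pi^B$, which holds because each $\pi^B$ is dominated by $\pi^\infty$, together with pointwise convergence of the transition kernels, shows that every weak limit point of $\pi^B$ is stationary for the uncapped chain. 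Uniqueness of that stationary law then gives the same conclusion. Hence $C^B\uparrow C^\infty$.
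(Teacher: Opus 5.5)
Your monotonicity argument is the same as the paper's. Both run the capped chains on shared arrival, service and thinning randomness, use that the one-step map is nondecreasing in the state and in the cap, and then pass to stationarity; you make the thinning coins and the limit $t\to\infty$ explicit. (Minor: one-step reachability of $0$ is a Doeblin condition. That already gives a unique stationary law and convergence, but irreducibility needs the separate remark that the chain can climb one level per slot.)

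For the limit you take a genuinely different, and sounder, route. The paper asserts the coupling $\tilde L^{B}(t)=\min\{B,\tilde L^\infty(t)\}$ and invokes monotone convergence, but that identity fails pathwise. With $B=1$ and $(\tilde L^{1},\tilde L^\infty)=(1,2)$, a service, survival of the remaining entanglement and no arrival lead to $(0,1)$, whereas $\min\{1,1\}=1$. It also fails in distribution, since it would make $\pi^B_0$ the same for all $B\ge1$; with $p=\lambda$ and $\mu\to0$ one gets $\pi^1_0\to\frac{1-\lambda}{2-\lambda}$ but $\pi^2_0\to\frac{1-\lambda}{3-\lambda}$.

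Your regeneration argument is essentially the qualitative version of the paper's proof of Theorem~\ref{thm:exp_rate_in_B}. That proof compares cycles via $\tau^B\le\tau$ and $\tau-\tau^B\le\tau\mathbf 1_{E_B}$, then adds an exponential stationary tail to get a rate. For bare convergence, dominated convergence suffices, and you only need positive recurrence of the uncapped chain, which your drift bound provides.

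One step should be stated explicitly. Since $\tilde L^{(B)}\le\tilde L^{(\infty)}$, the capped chain is at $0$ at every return of the uncapped chain to $0$, so the uncapped cycles regenerate the pair. Let $Z^B$ be the number of visits of the capped chain to $0$ in a cycle. Renewal--reward gives $\pi^B_0=\mathbb E_0[Z^B]/\mathbb E_0[\tau]$. Since $1\le Z^B\le\tau$, with $Z^B=1$ off $E_B$, you get $0\le\pi^B_0-\pi^\infty_0\le\mathbb E_0[\tau\mathbf 1_{E_B}]/\mathbb E_0[\tau]\to0$.

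Your tightness alternative is also valid: stochastic domination by $\pi^\infty$ controls the tails when passing to the limit in $\pi^B=\pi^BP^B$.
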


\begin{proof}
Fix i.i.d.\ randomness $\{S(t),A(t)\}$ and the binomial thinning coins across time slots.
Run $\tilde L^{B_1}$ and $\tilde L^{B_2}$ with the same randomness and the same initial state.
The one-step update is monotone in the state and the truncation level, hence
$\tilde L^{B_1}(t)\le \tilde L^{B_2}(t)$ for all $t$ almost surely.
Therefore, $\mathbf{1}\{\tilde L^{B_1}(t)=0\}\ge \mathbf{1}\{\tilde L^{B_2}(t)=0\}$ for all $t$,
and taking stationary expectations yields $\pi^{B_1}_0\ge \pi^{B_2}_0$, i.e.\ $C^{B_1}\le C^{B_2}$.
The limit $C^{B}\uparrow C^\infty$ follows by coupling $\tilde L^{B}(t)=\min\{B,\tilde L^\infty(t)\}$
and applying monotone convergence to stationary expectations.
\end{proof}

The next theorem show that $C^B$ approaches $C^\infty$ exponentially fast.

\begin{theorem}[Exponential rate in $B$]
\label{thm:exp_rate_in_B}
Fix $\lambda\in(0,1)$ and $\mu\in(0,1)$.
Let $\{L(t)\}$ be the infinite-buffer reference chain,
and let $\{L^B(t)\}$ be its truncation at level $B$.
Let $\pi$ and $\pi^B$ denote their stationary distributions, and set
\[
C^\infty:=1-\pi_0,
\qquad
C^B:=1-\pi_0^B.
\]
Then there exist constants $\theta>0$ and $M<\infty$ (depending on $(\lambda,\mu)$ but not on $B$) such that
\begin{equation}
\label{eq:CB_exp_rate_correct}
0\le C^\infty-C^B \le M e^{-\theta B},
\qquad \forall B\ge 1.
\end{equation}
In particular, $C^B\uparrow C^\infty$ and the convergence is exponentially fast in $B$.
\end{theorem}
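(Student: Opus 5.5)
Monotonicity and the lower bound $0\le C^\infty-C^B$ are already given by Proposition~\ref{prop:Cu_monotone_B}, so the task is the exponential upper bound. I will continue to use the coupling of Proposition~\ref{prop:Cu_monotone_B}, now in the other direction. Drive $L(t)$ and $L^B(t)$ with the same service indicators, arrival indicators and thinning coins, and start both from $0$. Then $L^B(t)\le L(t)$ for all $t$. The two paths agree until the first time $L$ would exceed $B$; before that time the truncation never acts.

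\emph{Step 1: a geometric tail for $\pi$.} The infinite-buffer chain has at most one arrival per slot and binomial thinning with survival probability $1-\mu$. So from state $i$ the conditional mean of the next state is at most $(1-\mu)i+\lambda$. This is a linear contraction, and I will use a Foster--Lyapunov bound with $V(i)=e^{\theta i}$:
\[
\mathbb E[e^{\theta L(t+1)}\mid L(t)=i]\le e^{\theta}\,\big(1-\mu+\mu e^{-\theta}\big)^{\max(i-1,0)}\cdot e^{\theta}\ \text{(crudely)},
\]
and $(1-\mu+\mu e^{-\theta})^i e^{2\theta}\le \tfrac12 e^{\theta i}$ once $i$ is large, provided $\theta$ is small. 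I expect to be able to say more: the binomial thinning gives $\mathbb E[e^{\theta \mathrm{Bin}(i,1-\mu)}]=(\mu+(1-\mu)e^\theta)^i$, which grows like $e^{\theta i}$ times a factor $<1$ only when $\theta$ is small. The cleaner route is therefore to compare against the dominating chain without service, whose stationary law is explicitly tractable. In that chain, $L(\infty)$ is a sum of independent Bernoulli survivors $\sum_k Y_k(1-\mu)^{k}$-type terms, namely $\sum_{k\ge0}\mathrm{Bern}(\lambda(1-\mu)^k)$. This sum has a finite moment generating function for every $\theta$. Either route gives $\pi([B,\infty))\le M_1e^{-\theta B}$ for some $\theta>0$.

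\emph{Step 2: comparing $\pi_0$ and $\pi_0^B$.} I will use a regenerative representation. Both chains regenerate at state $0$; they are positive recurrent and aperiodic, since $\mathbb P(0\to0)\ge1-\lambda>0$. Hence $\pi_0=1/\mathbb E_0[\tau]$ and $\pi^B_0=1/\mathbb E_0[\tau^B]$, where $\tau$ and $\tau^B$ are the return times to $0$. Under the coupling started at $0$, the paths coincide on the event that $L$ stays at or below $B$ during the excursion, and there $\tau=\tau^B$. On the complementary event, $L^B\le L$ gives $\tau^B\le\tau$. Therefore
\[
0\le \mathbb E_0[\tau]-\mathbb E_0[\tau^B]\le \mathbb E_0\big[\tau\,\mathbf 1\{\max_{t<\tau}L(t)>B\}\big]\le \sqrt{\mathbb E_0[\tau^2]}\,\sqrt{\mathbb P_0(\max_{t<\tau}L(t)>B)} .
\]
For the moment bound, $\tau$ has geometric tails, since $P(i,0)\ge\mu^{i}(1-\lambda)$ and returns to small states occur geometrically fast by Step 1. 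Hence $\mathbb E_0[\tau^2]<\infty$. To control the probability, note that the excursion probability of exceeding $B$ is at most $\mathbb E_0[\#\{t<\tau:L(t)>B\}]=\pi((B,\infty))\,\mathbb E_0[\tau]\le M_2e^{-\theta B}$ by the cycle formula. Finally, $C^\infty-C^B=\pi_0^B-\pi_0=\big(\mathbb E_0\tau-\mathbb E_0\tau^B\big)/(\mathbb E_0\tau\,\mathbb E_0\tau^B)$, and the denominator is at least $1$. This gives the bound with rate $\theta/2$.

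The main obstacle is Step 1, obtaining a uniform geometric tail for the infinite chain, together with the geometric tail of $\tau$. I would first try domination by the service-free chain. That chain's stationary law is a Poisson-binomial with summable parameters, so its tail is in fact superexponential. I would then transfer the tail to $L$ through the pathwise ordering $L\le L^{\text{no service}}$.
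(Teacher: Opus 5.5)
Your proposal is correct and follows essentially the same argument as the paper: regeneration at $0$ with $\pi_0=1/\mathbb E_0[\tau]$, coupled return times that agree off the overflow event so that $0\le \tau-\tau^B\le \tau\,\mathbf 1\{\max_{t<\tau}L(t)>B\}$, Cauchy--Schwarz, and the cycle formula to turn an exponential stationary tail into an exponentially small overflow probability. The only variation is that you get the stationary tail by dominating $L$ with the service-free chain, whose stationary law $\sum_{k\ge0}\mathrm{Bern}(\lambda(1-\mu)^k)$ has a finite moment generating function for every $\theta$ (pass the pathwise order to stationarity by starting both chains at $0$ and letting $t\to\infty$), instead of the paper's Foster--Lyapunov/Hajek route; $\mathbb E_0[\tau^2]<\infty$ still needs the exponential drift bound, and your ``crude'' display is missing the factor $e^{\theta(i-1)}$ --- once that factor is restored, the bound gives geometric drift for every $\theta>0$, not only for small $\theta$.
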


\begin{proof}
The proof is based on regeneration at state $0$.

\medskip
\noindent
\textit{Step 1:} We exploit the relationship between return times and stationary probabilities.
Let
\[
\tau:=\inf\{t\ge 1: L(t)=0\},
\qquad
\tau^B:=\inf\{t\ge 1: L^B(t)=0\},
\]
with both chains started from $L(0)=L^B(0)=0$ and coupled using the same arrivals,
service-attempt coins, and decoherence-thinning randomness.

For a positive recurrent irreducible Markov chain,
the stationary probability of state $0$ is the reciprocal of the mean return time to $0$.
Hence
\[
\pi_0=\frac{1}{\mathbb E_0[\tau]},
\qquad
\pi_0^B=\frac{1}{\mathbb E_0[\tau^B]}.
\]
Therefore
\[
\pi_0^B-\pi_0
=
\frac{\mathbb E_0[\tau]-\mathbb E_0[\tau^B]}
{\mathbb E_0[\tau]\;\mathbb E_0[\tau^B]}.
\]
Since both return times are at least $1$, the denominator is at least $1$, so
\[
0\le \pi_0^B-\pi_0 \le \mathbb E_0[\tau-\tau^B].
\]
Thus, our goal is to bound $E_0[\tau-\tau^B].$

\medskip
\noindent
\textit{Step 2:} Now, we relate the return times to zero of the finite and infinite-buffer reference chains.
Due to the coupling mentioned in Step 1 and monotonicity of the queue update rule, we have
\[
L^B(t)\le L(t)\qquad\text{for all }t\ge 0 \quad \text{a.s.}
\]
Hence
\[
\tau^B\le \tau \qquad \text{a.s.}
\]

Now define the overflow event
\[
E_B:=\left\{\max_{0\le t<\tau} L(t)\ge B+1\right\}.
\]
If $E_B$ does not occur, then the infinite-buffer chain never exceeds level $B$
before returning to $0$, so the truncation is never activated and the two chains evolve identically throughout the cycle. Therefore
\[
\tau^B=\tau \qquad \text{on } E_B^c.
\]
Thus
\begin{equation}
\label{eq:cycle_diff_on_overflow}
0\le \tau-\tau^B \le \tau\,\mathbf 1_{E_B}.
\end{equation}

\medskip
\noindent
\textit{Step 3:} Next, we argue that the overflow cycles in the infinite-buffer system are exponentially rare.
For the infinite-buffer reference chain, a standard Foster--Lyapunov drift condition holds outside a finite set. By applying the exponential bound for hitting times in \cite{hajek1982hitting}, this implies that the return time to a finite set has an exponential moment. In particular, the stationary distribution of the infinite-buffer chain has an exponential tail: 
there exist constants $\vartheta>0$ and $M_0<\infty$ such that the stationary distribution $\pi$ of the infinite-buffer chain satisfies
\begin{equation}
\label{eq:stationary_tail_exp}
\pi\{L\ge m\}\le M_0 e^{-\vartheta m},
\qquad \forall m\ge 0.
\end{equation}

Now use the standard regenerative occupation identity for the chain started from $0$:
for any set $A$,
\[
\mathbb E_0\!\left[\sum_{t=0}^{\tau-1}\mathbf 1\{L(t)\in A\}\right]
=
\frac{\pi(A)}{\pi_0}.
\]
Applying this with $A=\{B+1,B+2,\dots\}$ gives
\[
\mathbb E_0\!\left[\sum_{t=0}^{\tau-1}\mathbf 1\{L(t)\ge B+1\}\right]
=
\frac{\pi\{L\ge B+1\}}{\pi_0}.
\]
Thus,
\begin{equation}
\label{eq:overflow_prob_bound}
\mathbb P_0(E_B)
\le
\frac{\pi\{L\ge B+1\}}{\pi_0}
\le
\frac{M_0}{\pi_0}e^{-\vartheta(B+1)}.
\end{equation}
Thus the probability of an overflow cycle is exponentially small in $B$.

\medskip
\noindent
\textit{Step 4}: We now complete the proof.
Using \eqref{eq:cycle_diff_on_overflow} and Cauchy--Schwarz,
\[
\mathbb E_0[\tau-\tau^B]
\le
\mathbb E_0[\tau\,\mathbf 1_{E_B}]
\le
\big(\mathbb E_0[\tau^2]\big)^{1/2}\,\mathbb P_0(E_B)^{1/2}.
\]
The same Lyapunov argument mentioned earlier will also show that $E_0(\tau^2)<\infty.$ Hence, from \eqref{eq:overflow_prob_bound}, there exist constants $\theta_1>0$ and $M_1<\infty$ such that
\begin{equation}
\label{eq:cycle_length_diff_exp}
0\le \mathbb E_0[\tau-\tau^B]\le M_1 e^{-\theta_1 B}.
\end{equation}
From Step (1), 
using \eqref{eq:cycle_length_diff_exp},
\[
0\le \pi_0^B-\pi_0 \le M_1 e^{-\theta_1 B}.
\]
Finally, since
\[
C^\infty-C^B=(1-\pi_0)-(1-\pi_0^B)=\pi_0^B-\pi_0,
\]
we obtain \eqref{eq:CB_exp_rate_correct}.
\end{proof}

\subsection{Numerical results}
\label{sec:numerical_results}

We numerically compute the stationary distribution $\pi^{(B)}$ of the reference
Markov chain $\tilde L_u(t)\in\{0,1,\dots,B\}$
with service attempt probability $p_u=\lambda_u$. The steady-state availability is
$C_u^B=1-\pi^{(B)}_0$, and a lower bound on the fraction of the capacity region achievable by our algorithm is $ min_{e=(u,v)\in E}(C_u^B+C_v^B-1)^+$. Considering a node $u$ with the smallest value of $C_u^B$, and dropping the subscript, $\Gamma_{\mathrm{coh}}$ is further lower bounded by $(2C^B-1)^+.$ 

\begin{figure}[htbp]
         \centering
    \includegraphics[width=\linewidth]{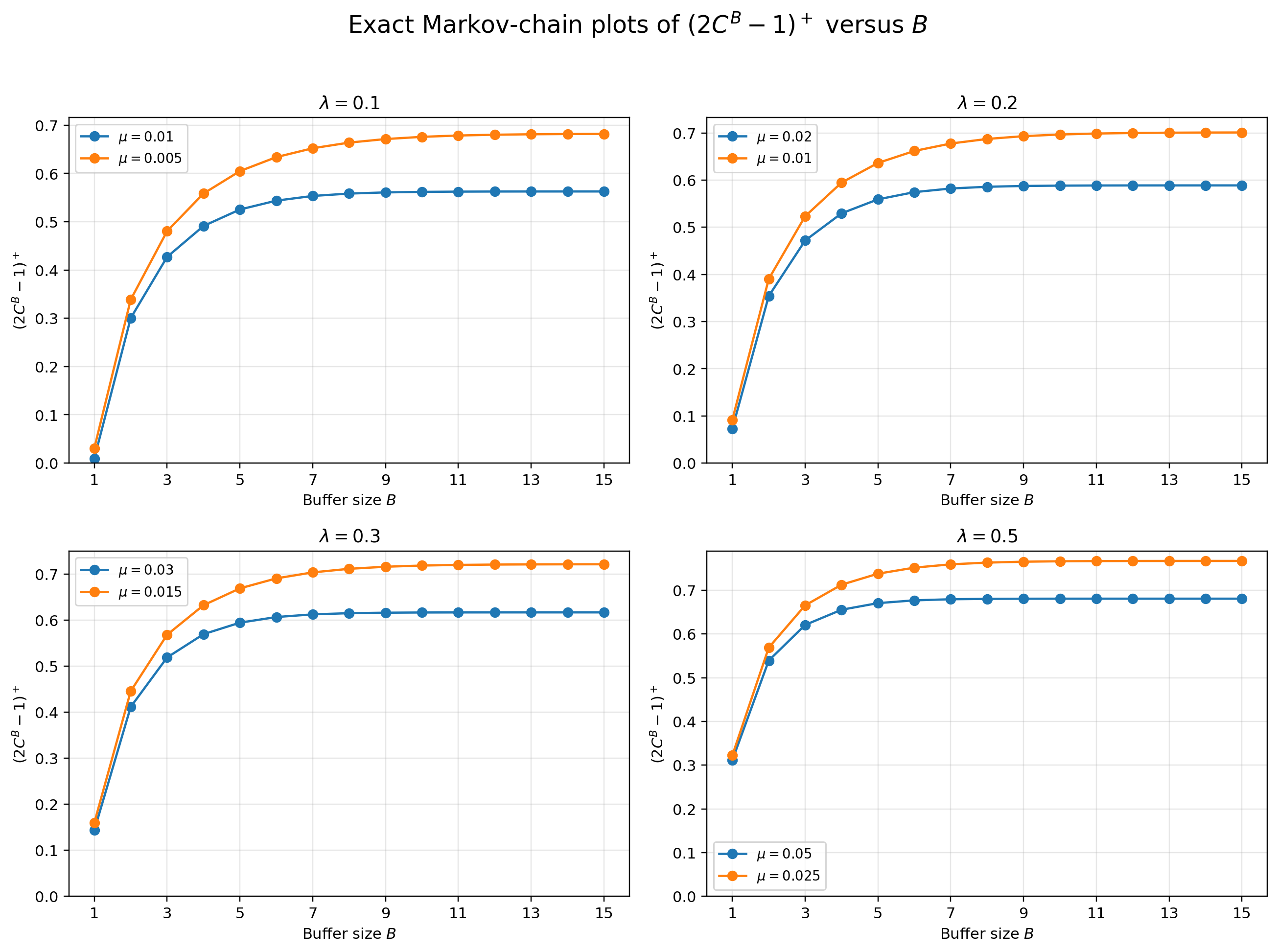}
    \caption{Results for Our Algorithm.}
    \label{fig:Algorithm I}
\end{figure}

Our numerical experiments are presented for these set of parameter values:
$\lambda \in [0.1,0.5]$, $\mu \in \{0.05\lambda,\,0.1\lambda\}$, and
buffer sizes $B \in [5,25].$ Since viable quantum switches are still under development, it is difficult to know what suitable switch parameters are consistent with quantum networking
hardware in the next five years or so. However, based on the available literature, these ranges of parameters seem feasible for the following reasons:

\begin{itemize}
\item \emph{Entanglement generation rates ($\lambda$).}
State-of-the-art quantum network testbeds achieve entanglement generation rates ranging from a few Hz to
tens of Hz over metropolitan distances, with projections toward
$\mathcal{O}(10^2)$ Hz in the near term \cite{pompili2021realizing,wehner2018quantum}.
When normalized to a time slot corresponding to a swap or scheduling epoch,
this corresponds to per-slot arrival probabilities in the range
$\lambda \in [0.1,0.5]$.

\item \emph{Decoherence rates ($\mu$).}
Quantum memories in current systems exhibit coherence times ranging from
milliseconds to seconds depending on the platform \cite{lvovsky2009optical,humphreys2018deterministic}.
Given entanglement generation times on the order of $10$--$100$ ms,
this yields a per-slot decoherence probability that is typically an order of
magnitude smaller than the arrival probability, i.e.,
$\mu \approx (0.05$--$0.1)\lambda$, consistent with the regimes considered here.

\item \emph{Buffer sizes ($B$).}
Near-term quantum repeaters are expected to support a small number of
simultaneously stored entangled pairs per link due to hardware constraints such
as limited memory qubits and control overhead \cite{wehner2018quantum,van2013repeaters}.
Experimental platforms currently demonstrate storage of a handful of qubits,
with projections toward tens of qubits per node in early deployments.
Thus, buffer sizes in the range $B \in [5,25]$ appear to be feasible for
near-term systems.
\end{itemize}

Figure~\ref{fig:Algorithm I} shows the numerical results for our algorithm.
The following observation can be made from the figures.
for any fixed $(\lambda,\mu)$,
$\Gamma_{\mathrm{coh}}$ increases rapidly with $B$ and saturates after a modest buffer size. This is consistent with the theory in Section~\ref{sec: achievable}.

\section{Conclusion}

In this paper, we considered the problem of scheduling in a quantum switch with stochastic entanglement generation, finite memory, and decoherence. Our objective was to design a scheduling algorithm with polynomial computational complexity that stabilizes a nontrivial fraction of the capacity region. Our results provide a tractable alternative to throughput-optimal dynamic-programming-based scheduling for quantum switches and help quantify the impact of entanglement generation rates, decoherence, and memory size on achievable throughput. Future work includes  extending the framework to richer entanglement-request models, and developing delay and queue-length performance guarantees.

\vskip 1em

\noindent\textbf{Acknowledgments:} 
\begin{itemize}
    \item Research supported by NSF Grant CCF 22-07547.
    \item The author gratefully acknowledges the use of ChatGPT and Gemini for brainstorming, literature search, editorial assistance, and obtaining numerical results. 
\end{itemize}

\bibliographystyle{abbrv}
\bibliography{switch}

\end{document}